\theoremstyle{plain} 
\newtheorem{theorem}{Theorem}
\newtheorem*{theorem*}{Theorem}
\newtheorem{prop}[theorem]{Proposition}
\newtheorem{lemma}[theorem]{Lemma}
\newtheorem{coro}[theorem]{Corollary}
\newtheorem{conj}[theorem]{Conjecture}
\theoremstyle{definition} \newtheorem{definition}{Definition}
\theoremstyle{definition}  
\theoremstyle{remark} \newtheorem*{remark}{Remark}
\theoremstyle{definition} \newtheorem*{ack}{Acknowledgment}
\author{Chuanhao Wei}
\title{Fibration of log-general type space over quasi-abelian varieties}
\begin{document}
\begin{abstract}
We show that there exists no smooth fibration of a smooth complex quasi-projective variety of log-general type over a quasi-abelian variety. The proof uses M. Popa and C. Schnell's construction of Higgs bundle.
\end{abstract}
\maketitle
\let\thefootnote\relax\footnotetext{\emph{2010 Mathematics Subject Classification.} Primary 14J99; Secondary 14F10 14D99.}
\let\thefootnote\relax\footnotetext{\emph{Key words and phrases.} Smooth fibration; Logarithmic pole; Holomorphic one-form; Zero locus; Quasi-Abelian variety; Kodaira dimension; Hodge module; Higgs Bundle.}
\section{Introduction}
In \cite{ps2}, Popa and Schnell use the vanishing theory of Saito's Hodge modules \cite{ps1} to prove the following
\begin{theorem}\label{T:vgt}
For any projective smooth variety of general type, there exists no non-vanishing global holomorphic one-form on it.
\end{theorem}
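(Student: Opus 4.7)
The plan is to derive a contradiction from the assumption that $X$ is a smooth projective variety of general type admitting a nowhere vanishing $\omega \in H^0(X,\Omega^1_X)$, using Popa--Schnell's Higgs-bundle construction on the Albanese variety. By the universal property of the Albanese morphism $a : X \to A := \mathrm{Alb}(X)$, write $\omega = a^*\eta$ for some $\eta \in H^0(A,\Omega^1_A)$; the nowhere-vanishing hypothesis on $\omega$ is equivalent to $\eta$ not annihilating the image of $da_x$ for any $x \in X$. A local computation then shows that the wedging complex
$$0 \to \mathcal{O}_X \xrightarrow{\wedge\omega} \Omega^1_X \xrightarrow{\wedge\omega} \cdots \xrightarrow{\wedge\omega} \omega_X \to 0$$
is acyclic: choose analytic coordinates $f_1,\dots,f_n$ with $\omega = df_1$, so that the complex pointwise reduces to the standard wedge complex of a nonzero element on the exterior algebra.

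The technical heart is to transport this acyclicity to the Albanese via the Hodge module construction. The direct image $a_+\mathbb{Q}^H_X[\dim X]$ is, by Saito's decomposition theorem, a direct sum of pure Hodge modules on $A$; passing to the graded of the Hodge filtration yields a Higgs bundle $(E,\theta)$ on $A$ with underlying sheaf $E \cong \bigoplus_p R^{n-p}a_*\Omega^p_X$ and Higgs field $\theta$ induced by wedging with $\eta$. Strictness of the Hodge filtration under proper pushforward then lets the Koszul acyclicity above descend to acyclicity of the Higgs complex $(E,\theta)$ over the open locus of $A$ where $\eta$ is nonzero.

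Finally, the Popa--Schnell generic vanishing theorem on abelian varieties combines with the acyclicity of the Higgs complex $(E,\theta)$ to produce vanishing statements for cohomology of pluricanonical sheaves twisted by line bundles pulled back from $A$. This is incompatible with the general-type hypothesis on $X$, which forces the pluricanonical systems to grow maximally; this contradiction completes the proof. I expect the main technical obstacle to lie in the middle paragraph: transferring Koszul acyclicity from the de Rham complex on $X$ to acyclicity of the graded Higgs complex on $A$ rests crucially on the strictness of the Hodge filtration under proper pushforward, which is the technical core of Popa--Schnell's construction.
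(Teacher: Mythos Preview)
The paper does not actually prove Theorem~\ref{T:vgt}: it is quoted in the introduction as the main result of Popa--Schnell \cite{ps2}, and serves only as motivation for the logarithmic conjecture and for Theorems~4 and~5, which are the results the paper does prove. So there is no proof in the paper to compare your proposal against.

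Since your proposal is evidently meant as a reconstruction of the argument in \cite{ps2}, let me comment on it in that light. The first two paragraphs are in the right spirit: exactness of the Koszul complex under the nowhere-vanishing hypothesis is correct, and passing to the associated graded of $a_{+}\mathbb{Q}^H_X[\dim X]$ does produce a Higgs sheaf on $A$ whose Higgs field in the direction $\eta$ encodes wedging with $\omega$. The genuine gap is your third paragraph. The contradiction in \cite{ps2} does \emph{not} arise by combining generic vanishing with Koszul acyclicity to obtain ``vanishing statements for cohomology of pluricanonical sheaves''; that sentence does not correspond to any step one can actually carry out, and as written a reader cannot see where the general-type hypothesis enters or why acyclicity along one cotangent direction should contradict bigness of $\omega_X$. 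The real mechanism is a positivity argument: one studies the support of the associated graded as a subset of $T^*A \cong A \times H^0(A,\Omega^1_A)$, and shows via a Kodaira--Saito type vanishing (this is where general type is used) that this support must project onto all of $H^0(A,\Omega^1_A)$, whereas acyclicity of the Higgs complex in the direction $\eta$ says precisely that $\eta$ is missed. You flag the middle paragraph as the delicate one, but in fact that step is largely formal; the substantive gap is the last step, which in your write-up is an assertion rather than an argument.
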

As an easy corollary to this theorem, we get
\begin{coro}
There exists no smooth fibration of a smooth projective variety of general type over an abelian variety.
\end{coro}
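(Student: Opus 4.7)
The plan is to argue by contradiction and reduce to Theorem~\ref{T:vgt} by producing a nowhere-vanishing holomorphic one-form on $X$. So suppose we had a smooth fibration $f:X\to A$ with $X$ a smooth projective variety of general type and $A$ an abelian variety.

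First I would choose any non-zero $\omega\in H^0(A,\Omega^1_A)$. Because $A$ is an abelian variety, its cotangent bundle is trivialized by translation-invariant forms, so every non-zero global holomorphic one-form on $A$ is nowhere vanishing. Next, pull $\omega$ back to $X$ to obtain $f^{*}\omega\in H^0(X,\Omega^1_X)$. Since $f$ is assumed to be a smooth fibration, the differential $df_x:T_xX\to T_{f(x)}A$ is surjective at every point $x\in X$; dually, the codifferential $f^{*}:f^{*}\Omega^1_A\to\Omega^1_X$ is a fibrewise injection. Therefore $f^{*}\omega$ is nowhere vanishing on $X$.

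This immediately contradicts Theorem~\ref{T:vgt}, which forbids the existence of any such form on a smooth projective variety of general type, and completes the proof.

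There is no real obstacle here: the corollary is a direct packaging of the theorem together with the two well-known facts that translation-invariant forms on an abelian variety have no zeros and that pullback of forms along a submersion is injective. The only thing one has to be a little careful about is phrasing ``smooth fibration'' so that it indeed guarantees surjectivity of $df$ at every point; once that is in hand, the argument is a one-line reduction.
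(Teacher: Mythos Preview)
Your argument is correct and is exactly the reduction the paper has in mind: the paper does not spell out a proof at all, simply labeling the statement ``an easy corollary'' to Theorem~\ref{T:vgt}, and your pull-back of a translation-invariant one-form along the submersion is the intended one-line justification.
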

To generalize the theorem to a log-version, we propose the following 
\begin{conj}
For any projective log-smooth pair $(X,D)$ of log-general type, there exists no non-vanishing global holomorphic log-one-form.
\end{conj}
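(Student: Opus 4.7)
\emph{Proof proposal.} The plan is to adapt the Hodge-module argument of Popa--Schnell~\cite{ps2} from the projective to the logarithmic setting. Suppose for contradiction that $\omega \in H^0(X, \Omega^1_X(\log D))$ is nowhere vanishing. Then $\omega$ realises $\mathcal{O}_X$ as a sub-line-bundle of $\Omega^1_X(\log D)$, and a fibre-wise check shows that the log-Koszul complex
\[
0 \to \mathcal{O}_X \xrightarrow{\wedge\omega} \Omega^1_X(\log D) \xrightarrow{\wedge\omega} \cdots \xrightarrow{\wedge\omega} \omega_X(D) \to 0
\]
is exact, and the top term $\omega_X(D) = \Omega^n_X(\log D)$ is big by the log-general-type hypothesis.

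Next, I would twist this complex by the graded pieces $\operatorname{gr}^F_\bullet M$ of the Hodge filtration of a mixed Hodge module $M$ on $X$ chosen in the spirit of \cite{ps2}. These graded pieces naturally assemble into a log-Higgs bundle $(E,\theta)$ whose Higgs field acts by wedge product with sections of $\Omega^1_X(\log D)$; non-vanishing of $\omega$ then preserves exactness of the twisted complex
\[
0 \to E \xrightarrow{\wedge\omega} E \otimes \Omega^1_X(\log D) \to \cdots \to E \otimes \omega_X(D) \to 0.
\]
The appropriate $M$ should be the intermediate extension across $D$ of a Hodge module pulled back from the log-Albanese image of $U = X \setminus D$, so that its top graded piece captures $\omega_X(D)$ effectively. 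Combining exactness with Saito's vanishing theorem for mixed Hodge modules~\cite{ps1}, after tensoring with a sufficiently ample line bundle, and invoking bigness of $K_X+D$, one aims to derive a contradictory cohomological vanishing, paralleling the projective proof.

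The hard part is the interaction between the log-pole behaviour of $\omega$ along $D$ and the choice of $M$. A nowhere-vanishing section of $\Omega^1_X(\log D)$ may, near a component of $D$, either restrict to a genuine holomorphic form on a stratum or reduce to a pure $dz/z$ residue, and the intermediate extension of $M$ must be compatible with every such behaviour for the twisted Koszul complex to remain globally exact. Controlling this extension --- which is trivial in the projective case because $D = \emptyset$ --- is the genuinely new analytic ingredient required to upgrade the paper's main theorem (the very rigid smooth-fibration case) to the full conjecture, and I expect this to be the principal obstacle.
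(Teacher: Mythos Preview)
The statement you are attempting to prove is \emph{Conjecture~3} in the paper, and the paper does not prove it: the introduction explicitly says ``the general case of the above conjecture is still open,'' and the difficulty is identified there in almost the same language you use --- ``to better understand the relation between log-forms and Saito's Mixed Hodge modules.'' So there is no paper proof to compare your proposal against; the paper proves only the two weaker statements, Theorem~4 (when $\omega_X(D)$ contains an ample line bundle) and Theorem~5 (no smooth fibration over a quasi-abelian variety).

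Your proposal is candid about being incomplete: you sketch the natural Popa--Schnell-style strategy (log-Koszul complex, twist by graded pieces of a mixed Hodge module, Saito vanishing) and then correctly isolate the obstruction --- controlling the intermediate extension of $M$ across $D$ so that the twisted Koszul complex stays exact when $\omega$ has genuine residues. But that obstruction is the whole problem. You have not supplied a candidate for $M$, nor an argument that any choice of $M$ makes the twisted complex exact at points of $D$, nor a replacement for Saito vanishing that interacts well with $\omega_X(D)$ big (as opposed to ample). In the ample case the paper's proof of Theorem~4 bypasses Hodge modules entirely and uses only Akizuki--Kodaira--Nakano vanishing on the log-Koszul complex; the leap from ample to big is exactly where the Hodge-module machinery of \cite{ps2} would be needed, and where your proposal, like the paper itself, stops short. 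As written, this is a reasonable research plan but not a proof.
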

According to \cite{s.i}, see also Section 3, in particular Proposition \ref{prop:globallog}, to prove this conjecture, it is natural to consider a morphism $f: X\to P^{r,d}$, and $f^{-1}L\subset D$, where $P^{r,d}$ is a natural compactification of a quasi-abelian variety $T^{r,d}$ (in the sense of Iitaka), with the boundary divisor $L$. Then we need to show that for any log-one form on $(X,D)$ which is of the form $f^*\theta$, where $\theta\in H^0(\Omega^1_{P^{r,d}}(log\ L))$, its zero locus is non-empty. When $r=0$ and $D=0$, this implies Theorem \ref{T:vgt}. However, the general case of the above conjecture is still open.

The difficulty of proving this conjecture using the method that appeared in \cite{ps2} is to better understand the relation between log-forms and Saito's Mixed Hodge modules. In this paper, we give a very simple proof of the following two weak versions of the conjecture.
\begin{theorem}
Let $(X, D)$ be a log smooth pair with log canonical bundle 
$\omega_X(D)$ which contains an ample line bundle. Then, for any global log-one-form $\theta\in H^0(\Omega^1_X(log\  D))$, the zero locus of $\theta$ must be non-empty.
\end{theorem}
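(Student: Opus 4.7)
The plan is a proof by contradiction via the Koszul complex of $\theta$. Suppose $\theta \in H^0(X, \Omega^1_X(\log D))$ has empty zero locus, and fix an ample line subsheaf $A \hookrightarrow \omega_X(D)$ given by the hypothesis. Because wedging with a nonzero covector is fibrewise exact, the Koszul complex
$$0 \to \mathcal{O}_X \xrightarrow{\wedge\theta} \Omega^1_X(\log D) \xrightarrow{\wedge\theta} \cdots \xrightarrow{\wedge\theta} \omega_X(D) \to 0$$
is an acyclic complex of coherent sheaves on $X$. Tensoring term-by-term with the line bundle $A^{-1}$ preserves acyclicity, so the complex $\mathcal{K}^\bullet$ with $\mathcal{K}^p := \Omega^p_X(\log D) \otimes A^{-1}$ still satisfies $\mathbb{H}^k(X, \mathcal{K}^\bullet) = 0$ for every $k$; in particular for $k = n$.

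The next step is to interpret this vanishing via the hypercohomology spectral sequence
$$E_1^{p,q} = H^q(X, \Omega^p_X(\log D) \otimes A^{-1}) \Rightarrow \mathbb{H}^{p+q}(X, \mathcal{K}^\bullet).$$
The crucial input is the logarithmic Akizuki--Nakano vanishing of Esnault--Viehweg, which gives $E_1^{p,q} = 0$ whenever $p + q < n$. A short diagonal chase then shows that the corner $E^{n,0}$ is isolated: any differential $d_r$ entering it originates from $E_r^{n-r, r-1}$, which lies on the diagonal $p+q = n-1$ and is therefore already zero on $E_1$; any differential leaving it lands in $E_r^{n+r, 1-r}$, which vanishes either because $\Omega^{n+1}_X = 0$ (when $r = 1$) or because the $q$-coordinate is negative (when $r \geq 2$). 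Consequently $E_\infty^{n,0} = E_1^{n,0} = H^0(X, \omega_X(D) \otimes A^{-1})$.

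Since this is a graded piece of the filtration on $\mathbb{H}^n(X, \mathcal{K}^\bullet) = 0$, we conclude $H^0(X, \omega_X(D) \otimes A^{-1}) = 0$. On the other hand, the inclusion $A \hookrightarrow \omega_X(D)$ twists to $\mathcal{O}_X \hookrightarrow \omega_X(D) \otimes A^{-1}$, producing a nonzero global section, which contradicts the previous vanishing. I do not expect any real obstacle; the only delicacies are citing the correct log Akizuki--Nakano vanishing and verifying the isolation of the $(n,0)$-corner in the spectral sequence, both of which are routine.
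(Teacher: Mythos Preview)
Your argument is correct and follows essentially the same route as the paper's proof: both exploit the exactness of the Koszul complex of a nowhere-vanishing $\theta$, twist by a line bundle coming from the ample subsheaf, and invoke logarithmic Akizuki--Nakano vanishing to force a contradiction. The only cosmetic differences are that the paper twists by $\mathcal{O}_X(L-D)$ (with $L$ ample and $\omega_X(D)=\mathcal{O}_X(L+E)$) and chases through short exact sequences followed by a Serre duality, whereas you twist by $A^{-1}$ and package the same chase as a hypercohomology spectral sequence; these are Serre-dual reformulations of one another.
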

\begin{theorem}\label{T:flgt}
Let $(X,D)$ be a projective log-smooth pair of log general type. Assume that we have a surjective projective morphism $f: X\setminus D \to T^{r,d}$, where $T^{r,d}$ is a quasi-abelian variety.  Then, $f$ is not smooth.
\end{theorem}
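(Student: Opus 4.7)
The plan is to pull back translation-invariant log-one-forms from the Iitaka compactification $P^{r,d}$ of $T^{r,d}$, using the smoothness of $f$ to ensure that these pullbacks are nowhere vanishing as sections of $\Omega^1_X(\log D)$, and then apply a Popa--Schnell style Higgs bundle vanishing (as in the preceding weak version, Theorem 3) to contradict the log-general type hypothesis on $(X,D)$.

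\textbf{Step 1 (Log-smooth extension).} Since $P^{r,d}$ is complete, $f$ extends to a rational map $X \dashrightarrow P^{r,d}$, and its indeterminacy locus lies inside $D$ because $f$ is already regular on $X \setminus D$. By a Hironaka resolution with all centers supported in $D$, and then further toroidal blow-ups along strata of $D$, I may replace $(X,D)$ by a log-smooth modification without changing the log-general-type assumption or the smoothness of $f$ on $X \setminus D$, and assume that the extension $\bar f : (X,D)\to (P^{r,d},L)$ is a log-smooth morphism of log pairs with $\bar f^{-1}(L)\subset D$. In particular, $\bar f^{*}\Omega^1_{P^{r,d}}(\log L)\hookrightarrow \Omega^1_X(\log D)$ is a subbundle inclusion.

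\textbf{Step 2 (Nowhere-vanishing pullback forms).} The sheaf $\Omega^1_{P^{r,d}}(\log L)$ is trivial of rank $r+d$, framed by the translation-invariant log-one-forms $\omega_1,\dots,\omega_{r+d}$. Their pullbacks $\theta_i=\bar f^*\omega_i\in H^0(X,\Omega^1_X(\log D))$ frame the subbundle $\bar f^{*}\Omega^1_{P^{r,d}}(\log L)$, so every nonzero $\mathbb{C}$-linear combination $\theta=\sum c_i\theta_i$ is a nowhere-vanishing section of the vector bundle $\Omega^1_X(\log D)$, and is closed since invariant forms on $P^{r,d}$ are closed. Thus the Koszul complex $(\Omega^\bullet_X(\log D),\wedge\theta)$ is an exact complex of locally free sheaves, terminating at $\omega_X(D)$.

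\textbf{Step 3 and main obstacle.} Apply the Popa--Schnell Higgs bundle construction to the log-one-form $\theta$ on $(X,D)$, exactly as in the proof of Theorem 3 above: acyclicity of the Koszul complex combined with a Saito-type vanishing for the logarithmic de Rham complex twisted by a suitable ample line bundle $A$ forces $H^0(X,\omega_X(D)\otimes A^{-1})=0$ (and similarly for sufficiently high powers), contradicting the bigness of $\omega_X(D)$. The expected main obstacle is precisely the subtlety flagged in the introduction, namely the relation between log-one-forms and Saito's mixed Hodge modules. What rescues us in this specific setting is that Step 1 and Step 2 produce a log-one-form that is simultaneously \emph{closed} and \emph{nowhere vanishing} as a section of $\Omega^1_X(\log D)$; these two properties are exactly what one needs to bypass a full development of the mixed-Hodge-module machinery and to run the Higgs-bundle vanishing argument directly in the logarithmic setting, just as in the preceding weak theorem.
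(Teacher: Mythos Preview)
Your Step~3 is where the argument breaks. What you are invoking there is precisely Conjecture~3 of the paper: that a nowhere-vanishing global log-one-form cannot exist on a pair of log-general type. The paper states explicitly that this is open, and that the obstacle is the relation between log-forms and Saito's mixed Hodge modules. The ``preceding weak theorem'' (Theorem~4) does \emph{not} bridge this gap: its hypothesis is that $\omega_X(D)$ contains an ample line bundle, which is strictly stronger than bigness, and its proof is a Koszul complex plus Akizuki--Kodaira--Nakano vanishing, not a Higgs-bundle or Saito-type argument. Your claim that one obtains $H^0(X,\omega_X(D)\otimes A^{-1})=0$ ``and similarly for sufficiently high powers'' does not follow from that method; the Koszul argument yields vanishing only for the single twist coming from the decomposition $\omega_X(D)=A+E$, and no such integral decomposition is available when $\omega_X(D)$ is merely big. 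So Steps~1--2 (which, modulo the overclaim of a subbundle inclusion, are essentially the content of Lemma~\ref{L:g}) only reduce Theorem~5 to the open Conjecture~3, and Step~3 does not supply a proof of that conjecture.

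The paper's proof takes a completely different route and never produces or uses a nowhere-vanishing log-one-form on $X$. Instead it works with the \emph{family} $f$: choose a big line bundle $\mathcal{G}$ on $P^{r,d}$, use bigness of $\omega_X(D)$ to get $H^0\big(X,(\omega_X(D))^k\otimes f^*\mathcal{G}^{-1}\big)\neq 0$, then base-change by the multiplication map $[2k]:P^{r,d}\to P^{r,d}$ to obtain $f':X'\to P^{r,d}$, still smooth over $T^{r,d}$, with $H^0\big(X',(\omega_{X'/P^{r,d}}\otimes f'^*\mathcal{G}^{-2})^k\big)\neq 0$. One then feeds this into the Popa--Schnell Higgs bundle machinery of \cite[Theorem~9.4]{ps3} (the Viehweg-hyperbolicity paper), where the Higgs bundle comes from the variation of Hodge structure of the family $f'$, not from any one-form. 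The output is that $\omega_{P^{r,d}}(L)\simeq\mathcal{O}_{P^{r,d}}$ would have to be big, which is absurd. So the two ``Higgs bundle'' arguments you are conflating---the \cite{ps2}-style argument attached to a one-form, versus the \cite{ps3}-style argument attached to a family---are genuinely different, and only the latter is known to work here.
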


Given a smooth quasi-projective variety $V$, we say that $(X,D)$ gives a smooth compactification of $V$, if $(X,D)$ is log-smooth, i.e. $X$ is smooth and $D$ is a reduced divisor over $X$ with simply normal crossing support, and $X\setminus D=V$. Since the logarithmic Kodaira dimension does not depend on the smooth compactification, Theorem \ref{T:flgt} is only about $X\setminus D$.

When $d=0$ and $r=1$, Theorem \ref{T:flgt} is just Theorem 0.3 in \cite{vz}, and in fact the proof in this case is very similar to the one that appeared in Viehweg and Zuo's work. Note that this is a corollary to Conjecture 3 as explained in Section 4. 
\begin{ack}
The author would like to thank C. Hacon for suggesting this topic and for useful discussions. The author is grateful to M. Popa and C. Schnell for answering his questions. The author was partially supported by DMS-1300750, DMS-1265285 and a grant from the Simons Foundation, Award Number 256202.
\end{ack}

\section{Proof of Theorem 4}
\begin{proof}[Proof of Theorem 4]
Assume $\theta\in H^0(\Omega_X(log\  D))$ is a global log-one-form that vanishes nowhere. We have the following exact Koszul complex defined by $\theta$:
$$0\rightarrow\mathcal{O}_X\xrightarrow {\wedge \theta}\Omega^1_X(log\  D) \xrightarrow {\wedge \theta}...\xrightarrow {\wedge \theta}\Omega^n_X(log\  D)\rightarrow 0.$$
Let $\mathcal{E}^{0}=0,\mathcal{E}^{n}=\Omega^n_X(log\  D)$, and 
$$\mathcal{E}^i=ker(\wedge\theta): \Omega_x^i(log\  D)\to\Omega_X^{i+1}(log\  D),$$
for $i=1,...,n-1$.

Since $\omega_X(D)$ contains an ample line bundle, we can write 
$$\omega_X(D)=\mathcal{O}_X(L+E),$$
for some ample divisor $L$ and some effective (possibly trivial) divisor $E$ on $X$.

Twisting the Koszul complex above by $\mathcal{O}_X(L-D)$, we have the following exact sequence:
$$0\rightarrow\mathcal{O}_X(L-D)\xrightarrow {\wedge \theta}\Omega^1_X(log\  D)(L-D) \xrightarrow {\wedge \theta}...\xrightarrow {\wedge \theta}\Omega^n_X(log\  D)(L-D)\rightarrow 0.$$
This exact sequence can be decomposed into the following short exact sequences:
$$0\to \mathcal{E}^i(L-D)\to \Omega_X^i(log\  D)(L-D)\to \mathcal{E}^{i+1}(L-D)\to 0.$$
By Akizuki-Kodaira-Nakano vanishing \cite[6.4 Corollary]{ev}, we have 
$$H^{n-i+1}(\Omega_X^i(log\  D)(L-D))=0.$$
Hence the natural differential map associated to the short exact sequences
$$H^{n-i}(\mathcal{E}^{i+1}(L-D))\to H^{n-i+1}(\mathcal{E}^{i}(L-D))$$
is surjective.
Note that $H^1(\mathcal{E}^{n}(L-D))=H^1(\Omega^n_X(log\  D)(L-D))=0.$
Since $H^1(\mathcal{E}^n(L-D))\to H^n(\mathcal{E}^1(L-D))$
is surjective, we also have that
$$H^n(\mathcal{E}^{1}(L-D))=0.$$
However, since 
$\mathcal{E}^{1}=\mathcal{O}_X,$
and
$\mathcal{O}_X(L+E)=\omega_X(D),$
we have
$$\mathcal{E}^{1}(L-D)=\omega_X(-E).$$
Hence, by Serre duality, we have 
$$H^0(\mathcal{O}_X(E))\simeq H^n(\omega_X(-E))^\vee=0$$
which contradicts the fact that $E$ is effective.
\end{proof}
\begin{remark}
The proof is based on the arguments that appears  in \cite{hk}.
\end{remark}
\section{quasi-abelian varieties}
In this section, we recall the definition of quasi-abelian varieties in the sense of Iitaka, \cite{s.i}, and recall a couple of propositions that will be later used. We refer to Fujino's survey article \cite{o.f} for details.
\begin{definition}
$T^{r,d}$ is an quasi-abelian variety (in the sense of Iitaka), if it is an extension of a $d$-dimensional abelian variety $A^d$ by algebraic tori $\mathbb{G}_m^{r}$, i.e. it is a connected commutative algebraic group which has the following Chevalley decomposition
$$1\to \mathbb{G}_m^r\to T^{r,d}\to A^d\to 1.$$
In particular, $T^{r,d}$ is a principal $\mathbb{G}_m^r$-bundle over $A^d$.
\end{definition}

\begin{remark}
The commutativity of $T^{r,d}$ is not necessary for the definition, since it can be deduced from the remaining conditions \cite{o.f}.
\end{remark}

We consider the following group homomorphism:
$$\rho: \mathbb{G}^r_m\to PGL(r,\mathbb{C}),$$
given by
$$\rho(\lambda_1,...,\lambda_r)=
\begin{bmatrix}
    1 &   &     &0    \\
      &\lambda_1 &   &   \\
      &   & \ddots &   \\
       0&    &  & \lambda_r
\end{bmatrix}
$$
Let $P^{r,d}=T^{r,d}\times_{\rho}\mathbb{P}^r=T^{r,d}\times\mathbb{P}^r/\mathbb{G}^r_m$, which is a $\mathbb{P}^r$-bundle over $A^d$. We can view $P^{r,d}$ as a compactification of $T^{r,d}$ which naturally carries the $T^{r,d}$ action on it and denote the boundary divisor $L$, which has simple normal crossings and is $T^{r,d}$-invariant for each stratum.

Abusing the notations a little bit, over $T^{r,d}$, we have the morphism $[k]:T^{r,d}\to T^{r,d}$ given by multiplication by $k$, if we use addition for the group operation on $T^{r,d}$. We can also define $[k]:\mathbb{P}^r\to \mathbb{P}^r$, given by the Fermat's morphism $[z_0,...,z_r]\to [z^k_0,...,z^k_r]$. It is not hard to check that, combining these two morphisms, we get a finite morphism
$$[k]:P^{r,d}\to P^{r,d},$$
which is étale over $T^{r,d}$, and ramified along the boundary divisor $L$ by degree $k$.

\begin{prop}
The logarithmic cotangent bundle $\Omega^1_{P^{r,d}}(log\ L)$ over $P^{r,d}$ is a globally free vector bundle. In particular, we have $\omega_{P^{r,d}}(L)\simeq \mathcal{O}_{P^{r,d}}$.
\end{prop}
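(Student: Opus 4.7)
The plan is to exploit the natural action of $T^{r,d}$ on $P^{r,d}$ to trivialize the logarithmic tangent bundle and then dualize. Since $T^{r,d}$ is a commutative algebraic group whose translation action on itself extends to an action on $P^{r,d}$ preserving the boundary divisor $L$ (indeed preserving every stratum of $L$, as noted just before the proposition), the infinitesimal action gives a $\mathbb{C}$-linear map from the Lie algebra $\mathfrak{t}^{r,d}$ into the space of vector fields on $P^{r,d}$ tangent to $L$, i.e.\ into $H^0(P^{r,d}, T_{P^{r,d}}(-\log L))$.

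Assembling these into a bundle map, I would form
$$\alpha \colon \mathfrak{t}^{r,d} \otimes_{\mathbb{C}} \mathcal{O}_{P^{r,d}} \longrightarrow T_{P^{r,d}}(-\log L).$$
Both sides are locally free of rank $r+d = \dim P^{r,d}$, so by Nakayama it suffices to show that $\alpha$ is surjective on every fiber. On the open dense stratum $T^{r,d} \subset P^{r,d}$ this is immediate, since $T^{r,d}$ acts simply transitively on itself, so the infinitesimal action is a fiberwise isomorphism there.

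The core of the argument, and the step I expect to be the main obstacle, is fiberwise surjectivity at a boundary point $p$ on a stratum $S$ of codimension $k$. Choosing local coordinates $(x_1,\ldots,x_k,y_1,\ldots,y_{n-k})$ at $p$ with $L = \{x_1 \cdots x_k = 0\}$, the log tangent fiber sits in
$$0 \longrightarrow T_p S \longrightarrow T_{P^{r,d}}(-\log L)\big|_p \longrightarrow \bigoplus_{i=1}^{k} \mathbb{C}\cdot (x_i \partial_{x_i})\big|_p \longrightarrow 0.$$
Since the orbit of $p$ is open in $S$, the quotient $\mathfrak{t}^{r,d}/\mathfrak{h}_p$ maps isomorphically onto $T_p S$, where $\mathfrak{h}_p$ is the Lie algebra of the stabilizer $H_p$. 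Because the $T^{r,d}$-action on the base $A^d$ is simply transitive, $H_p$ lies entirely in $\mathbb{G}_m^r$, and the explicit description $P^{r,d} = T^{r,d} \times_\rho \mathbb{P}^r$ reduces the action on the fiber $\mathbb{P}^r$ to the standard diagonal torus action; the weights of $H_p$ on the normal bundle to a torus-fixed stratum form a basis of the character lattice of $H_p$. Dually, this means $\mathfrak{h}_p$ maps isomorphically onto $\bigoplus_i \mathbb{C}\cdot (x_i\partial_{x_i})|_p$, and combining with the tangential part gives surjectivity of $\alpha$ at $p$.

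Once $\alpha$ is an isomorphism, dualizing yields $\Omega^1_{P^{r,d}}(\log L) \simeq (\mathfrak{t}^{r,d})^* \otimes \mathcal{O}_{P^{r,d}}$, which is globally free of rank $r+d$; taking the top exterior power then gives $\omega_{P^{r,d}}(L) \simeq \mathcal{O}_{P^{r,d}}$.
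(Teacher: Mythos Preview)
Your argument is correct. Both your proof and the paper's exploit the $T^{r,d}$-action on $P^{r,d}$, but from dual sides. The paper trivializes $\Omega^1_{P^{r,d}}(\log L)$ directly: it extends the $T^{r,d}$-invariant one-forms from the open orbit to a trivial sub-bundle $\mathcal{T}\subset\Omega^1_{P^{r,d}}(*L)$, and then checks the two inclusions $\mathcal{T}\subset\Omega^1_{P^{r,d}}(\log L)$ and $\Omega^1_{P^{r,d}}(\log L)\subset\mathcal{T}$ by hand, using that on each $\mathbb{P}^r$-fiber the non-holomorphic part of an invariant form is a linear combination of $dx_i/x_i$, and conversely that each $df_i/f_i$ is $T^{r,d}$-invariant because the components of $L$ are. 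You instead trivialize $T_{P^{r,d}}(-\log L)$ via the infinitesimal action and verify fiberwise surjectivity at boundary points through an orbit--stabilizer analysis (the stabilizer $H_p\subset\mathbb{G}_m^r$ acts on the normal directions with linearly independent weights), then dualize. The paper's route is shorter and more explicit in coordinates; yours is more structural and makes transparent why the rank matches at every stratum. The underlying content is the same: the log (co)tangent bundle of this equivariant compactification of a commutative group is trivialized by the invariant (co)vector fields.
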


\begin{proof}
Since $T^{r,d}$ is a symmetric space, the global $T^{r,d}$-invariant $1$-forms generate the cotangent space $\Omega^1_{T^{r,d}}$ of $T^{r,d}$. This free vector space can be extended onto $P^{r,d}$ viewed as a sub-bundle of $\Omega^1_{P^{r,d}}(*L)$. Now we need to show that this sub-vector bundle, denoted as $\mathcal{T}$, is $\Omega^1_{P^{r,d}}(log\ L)$.

To see that $\mathcal{T}\subset \Omega^1_{P^{r,d}}(log\ L)$, pick any local section of $\Omega^1_{P^{r,d}}(*L)$, which is $T^{r,d}$-invariant, we only need to show that it has at most a simple log pole along $L$. Restricted onto each $\mathbb{P}^r$-fiber of $P^{r,d}$, its non-holomorphic part is a linear combination of $\frac{dx_i}{x_i}$. Hence locally on $P^{r,d}$, its non-holomorphic part is a linear combination of $\frac{df_i}{f_i}$, where $f_i$ are local functions which define $L$. For $\Omega^1_{P^{r,d}}(log\ L)\subset\mathcal{T}$, we only need to show that  $\frac{df_i}{f_i}$ is a local section of $\mathcal{T}$.
Since we know that $L$ is $T^{r,d}$-invariant for each irreducible component, $\frac{df_i}{f_i}$ is $T^{r,d}$-invariant. 
\end{proof}

For any smooth quasi-projective variety $V$, we use the notation $T_1(V)$ to denote the space of global holomorphic one-forms on $V$ that can be extended to a log-one-form onto a smooth compactification of $V$. This space does not depend on the smooth compactification, \cite{s.i}. Further, we can canonically define a quasi-albanese morphism $a_V:V\to \tilde{A}_V$, where $\tilde{A}_V$ is a quasi-abelian variety and we call it the quasi-albanese variety of $V$. In particular, if $V$ is projective, $a_V$ is just the usual albanese morphism.
\begin{prop}\label{prop:globallog}
We have that 
$$a_V^*:T_1(\tilde{A}_V)\to T_1(V)$$
is an isomorphism.
\end{prop}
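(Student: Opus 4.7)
The plan is to reduce the statement to the explicit construction of $\tilde{A}_V$ itself. Following Iitaka's construction as recalled in \cite{s.i}, one realizes $\tilde{A}_V$ as a quotient $T_1(V)^{\vee}/\Lambda$, where $\Lambda$ is the image of the period map $H_1(V,\mathbb{Z})\to T_1(V)^{\vee}$ given by $[\gamma]\mapsto (\eta\mapsto \int_\gamma \eta)$. Fixing a base point $v_0\in V$, the quasi-albanese map $a_V$ sends $v$ to the functional $\eta\mapsto \int_{v_0}^v \eta$ taken modulo $\Lambda$ (so that path-dependence is absorbed). The nontrivial algebraic content, which we take from \cite{s.i, o.f}, is that this quotient is in fact a quasi-abelian variety in the sense of the previous definition and that $a_V$ is a morphism of algebraic varieties.

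Once this construction is in place, the proposition is essentially tautological. The cotangent space of $\tilde{A}_V$ at the origin is, by construction, canonically dual to the tangent space $T_1(V)^{\vee}$, hence canonically isomorphic to $T_1(V)$. By Proposition \ref{prop:globallog}'s predecessor (i.e. the proposition establishing that $\Omega^1_{P^{r,d}}(\log L)$ is globally free, generated by $T^{r,d}$-invariant forms), every element of the cotangent space at the origin extends uniquely to a global invariant log-1-form on $P^{r,d}$, and every global log-1-form on $P^{r,d}$ arises this way. This yields a canonical identification $T_1(\tilde{A}_V)\simeq T_1(V)$, and the chain rule applied to the integration formula defining $a_V$ shows that this identification is precisely the inverse of $a_V^*$.

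To spell out the two halves: for surjectivity, given $\eta\in T_1(V)$, the invariant log-1-form $\theta$ on $P^{r,d}$ corresponding to $\eta$ under the above identification pulls back to $\eta$ because, locally on the universal cover $T_1(V)^{\vee}$, $a_V$ is given by integration and $\theta$ is the constant form dual to $\eta$. For injectivity, any $\theta\in T_1(\tilde{A}_V)$ with $a_V^*\theta=0$ vanishes on the image of $a_V$; but this image is Zariski dense in $\tilde{A}_V$ (otherwise one could factor $a_V$ through a smaller quasi-abelian variety, contradicting the minimality built into the construction of $\tilde{A}_V$), and since $\theta$ is a global section of the locally free sheaf $\Omega^1_{P^{r,d}}(\log L)$, it must vanish identically.

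The main obstacle is not in the formal argument above but in Iitaka's construction of $\tilde{A}_V$, which we have black-boxed: one must show that $T_1(V)^{\vee}/\Lambda$ is algebraic with the correct Chevalley extension structure, and this uses the residue exact sequence to decompose $T_1(V)$ into an abelian part coming from $H^0(X,\Omega^1_X)$ and a toric part coming from log-forms with nontrivial residues, together with a check that $\Lambda$ splits accordingly. Granting this, the proposition itself requires only unwinding definitions.
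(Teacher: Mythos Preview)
Your proposal is correct in outline and follows the standard route: one unwinds Iitaka's construction of $\tilde{A}_V$ as $T_1(V)^{\vee}/\Lambda$, identifies the invariant log-forms on the compactification with the cotangent space at the origin via the earlier freeness proposition, and then reads off that $a_V^*$ is the tautological identification. The black-boxing of the algebraicity of $\tilde{A}_V$ is appropriate and honestly flagged.

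The comparison with the paper, however, is somewhat degenerate: the paper does not give a proof at all. It simply writes ``We refer to \cite[3.12]{o.f} for the proof.'' So rather than taking a different route, you have supplied an argument where the paper only supplies a citation. What you have written is essentially a sketch of what one finds in Iitaka's original paper \cite{s.i} and in Fujino's survey \cite{o.f}; in that sense your approach and the cited reference agree. One small remark: your injectivity argument via Zariski density of the image is slightly roundabout, since injectivity is already immediate from the fact that both sides have the same dimension (namely $\dim T_1(V)$, by construction) once surjectivity is known; alternatively, injectivity of $a_V^*$ on invariant forms follows directly from the differential of $a_V$ being the evaluation map, without needing density.
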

We refer to \cite[3.12]{o.f} for the proof.
\section{Proof of Theorem 5}
In this section, we apply the Higgs bundle construction from \cite{ps3}, and use the first reduction step in \cite{ps2}, which also appeared in \cite{vz}, to prove Theorem 5

\begin{proof}[Proof of Theorem 5]
Assume that the morphism $f:X\setminus D\to T^{r,d}$ is smooth. After blowing up $X$ on its boundary $D$ and taking a log-resolution if necessary, we can extend $f$ to $X\to P^{r,d}$ (\cite[II, Example 7.17.3]{r.h}), which we also denote by $f: X\to P^{r,d}$. Since the logarithmic Kodaira dimension does not depend the smooth compactification, we still have that $(X,D)$ is of log-general type. Denote the boundary divisor on $P^{r,d}$ by $L$, which has $r+1$ $T^{r,d}$-invariant components and consists $r+1$ general hyperplanes restricting on each $\mathbb{P}^r$ fiber.

Denote $p:P^{r,d}\to A$ the natural projection. Pick any ample effective divisor $E$ on $A$ and let $\mathcal{G}=\mathcal{O}_{P^{r,d}}(L)\otimes p^*\mathcal{O}_A(E)$ be a line bundle on $P^{r,d}$. Since $\omega_X(D)$ is big, there exists a positive integer $k$ such that 
$$H^0(X, (\omega_X(D))^{k}\otimes f^*\mathcal{G}^{-1})\neq 0.$$

Let $[2k]: P^{r,d}\to P^{r,d}$ be the finite morphism which is defined in Section 3. Let $\phi:X'\to X$ be the corresponding base change followed by normalization and a log-resolution, and let $(X',D')$ be the induced log-smooth pair. We may assume that $X'\setminus D'=X\setminus D\times_{T^{r,d}}T^{r,d}$. Let $f':X'\to P^{r,d}$ be the induced morphism.
\begin{center}
\begin{tikzcd}
X' \arrow{r}{\phi} \arrow{d}{f'}
&X \arrow{d}{f}\\
P^{r,d}\arrow{r}{[2k]}
&P^{r,d}
\end{tikzcd}
\end{center}
Since $[2k]$ is étale over $T^{r,d}$, we have that $f'$ is smooth over $T^{r,d}$.
Since $D'\subset f'^*p^* L$, $\omega_{P^{r,d}}\simeq \mathcal{O}_{P^{r,d}}(-L)$ and consider \cite[II, Proposition 5.20]{j.k}, then $\phi^*\omega_X(D)\subset \omega_{X'}(D')\subset \omega_{X'/P^{r,d}} $. Further, $[2k]^*L=2kL$ and $[2k]^*E-2kE$ is linear equivalent to an effective divisor, \cite[II, 6]{d.m}. Hence
$$
\phi^*( \omega_{X}(D)^{k}\otimes f^*\mathcal{G}^{-1})\subset  \omega_{X'/P^{r,d}}^{k}\otimes f'^*(\mathcal{G}^{-2k}).
$$
Let $\mathcal{B}=\omega_{X'/P^{r,d}}\otimes f'^*(\mathcal{G}^{-2})$. Then, by the above inclusion, we see that 
$$H^0(X',\mathcal{B}^k)\neq 0.$$ 

Now, let's apply  \cite[Theorem 9.4 (a)]{ps3} to our situation. Note that even through $f'$ does not have connected fibers, \cite[Theorem 9.4]{ps3} still applies with minor changes. (See the remark below.) In our case, we can let $D_{f'}=L$, which, by assumption, contains the image of all singular fibers. Hence 
$$\mathcal{F}_0\supset \mathcal{G}^2(-D_{f'})\supset \mathcal{G} $$
Since $\mathcal{G}$ is big, by \cite[Theorem 9.4 (c) and Theorem 19.1]{ps3}, we conclude that $\omega_{P^{r,d}}(D_{f'})\simeq \mathcal{O}_{P^{r,d}}$ is big which is absurd.
\end{proof}
\begin{remark}
In \cite[Theorem 9.4]{ps3}, if we drop the assumption that $f$ has connected fiber, we only lose the property that $f_*\mathcal{O}_Y=\mathcal{O}_X$. The proof still works except that we must replace the result (a) by: One has $L(-D_f)\otimes f_*\mathcal{O}_Y\subset \mathcal{F}_0\subset L\otimes f_*\mathcal{O}_Y$. As long as $f$ is surjective, we still have $f_*\mathcal{O}_Y\supset\mathcal{O}_X$. Hence $L(-D_f)\subset \mathcal{F}_0$ which is what is needed in the proof.
\end{remark}

Finally, we show that Conjecture 3 implies Theorem 5 due to the following

\begin{lemma}\label{L:g}
Given a morphism $f: X\to P^{r,d}$, where $X$ is smooth and quasi-projective and $P^{r,d}$ is the $\mathbb{P}^r$-bundle compactification of a quasi-abelian variety $T^{r,d}$, with $L$ be the boundary divisor on $P^{r,d}$. Let $D=(f^*L)_{red}$ and assume that $(X,D)$ is log-smooth. Then for general $\theta \in H^0(\Omega^1_{P^{r,d}}(log\  L))$, $f^* \theta\in H^0 (\Omega^1_X(log\ D))$ has a simple log pole on every point of $D$. In particular, it does not vanish at any point of $D$.
\end{lemma}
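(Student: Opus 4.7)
My plan is to reduce the lemma to linear algebra on the residue vector of $\theta$ along the components of $L$. First, I will fix a component $D_j$ of $D$ and a point $p \in D_j$, and compute $\mathrm{Res}_{D_j}(f^*\theta)$ in local log-coordinates. With $D_j = \{x_j = 0\}$ on $(X,D)$ near $p$ and $L_i = \{y_i = 0\}$ on $(P^{r,d}, L)$ near $f(p)$, I write $f^* y_i = u_i \prod_j x_j^{a_{ij}}$ for a unit $u_i$ and the multiplicity $a_{ij}$ of $D_j$ in the pullback divisor $f^* L_i$. Expanding $\theta$ locally as $\theta = \sum_i c_i \, dy_i/y_i + \eta$ with $\eta$ holomorphic, a direct pullback calculation yields
$$\mathrm{Res}_{D_j}(f^*\theta) = \Bigl(\sum_i a_{ij}\, c_i\Bigr)\Big|_{D_j}.$$

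The key step, and the one I expect to be the main subtlety, is to replace the locally defined coefficient $c_i|_{D_j}$ by a global constant. The canonical residue $\mathrm{Res}_{L_i}(\theta) \in H^0(L_i, \mathcal{O}_{L_i})$ agrees locally with $c_i|_{L_i}$. Since $\theta$ is $T^{r,d}$-invariant and $T^{r,d}$ acts on $L_i$ with a dense open orbit (the stabilizer of a general point being one-dimensional), this residue must be a constant $\alpha_i \in \mathbb{C}$. Hence $\mathrm{Res}_{D_j}(f^*\theta)$ is the global constant $\sum_i a_{ij}\alpha_i$, which is either identically zero or nowhere vanishing on $D_j$.

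It remains to show that for general $\theta$, all the constants $\sum_i a_{ij}\alpha_i$ are nonzero. The residue map $\theta \mapsto (\alpha_i)_{i=0}^r$ sends $H^0(\Omega^1_{P^{r,d}}(\log L))$ onto the hyperplane $\{\sum_i \alpha_i = 0\} \subset \mathbb{C}^{r+1}$: its image is contained in this hyperplane (checked on the explicit $\mathbb{G}_m^r$-generators $d(z_i/z_0)/(z_i/z_0)$, noting forms pulled back from $A$ have zero residue), and a dimension count gives equality. The constraint $\sum_i a_{ij}\alpha_i \neq 0$ cuts out a proper hyperplane of this residue hyperplane precisely when $(a_{ij})_i$ is not a scalar multiple of $(1,\dots,1)$. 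But $L_0 \cap \cdots \cap L_r = \emptyset$ already on each $\mathbb{P}^r$-fiber, so $f(D_j)$ cannot lie in every $L_i$, forcing some $a_{ij} = 0$; and $D_j \subset (f^* L)_{\mathrm{red}}$ forces some $a_{ij} \geq 1$. So $(a_{ij})_i$ has both zero and nonzero entries, and proportionality fails. Since $D$ has only finitely many components, a generic $\theta$ avoids all finitely many excluded hyperplanes, giving $f^*\theta$ a simple log pole along every component of $D$, hence nonvanishing at every point of $D$.
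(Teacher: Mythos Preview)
Your argument is correct, and at its core it performs the same local computation as the paper: writing $f^*y_i = u_i\prod_j x_j^{a_{ij}}$ and reading off the coefficient of $dx_j/x_j$ in $f^*\theta$. The packaging, however, is genuinely different. The paper first reduces to the case $P^{r,d}=\mathbb{P}^r$ by covering $P^{r,d}$ with trivial $\mathbb{P}^r$-bundles, then passes to affine charts $U_i=\{x_i\neq 0\}$ and to a finite affine cover of each $f^{-1}(U_i)$, and finally asserts that for a general linear combination of the $dx_i/x_i$ the pullback has a pole at every point of the relevant preimage. You instead work globally on $P^{r,d}$: you use that each $L_i$ is projective and connected (or, as you phrase it, $T^{r,d}$-homogeneous) to see that $\mathrm{Res}_{L_i}(\theta)=\alpha_i$ is a constant, you identify the image of the residue map $\theta\mapsto(\alpha_0,\dots,\alpha_r)$ as the hyperplane $\{\sum\alpha_i=0\}$, and you give the precise genericity condition $\sum_i a_{ij}\alpha_i\neq 0$ together with the clean reason it is nonvacuous (some $a_{ij}=0$ since $\bigcap_i L_i=\emptyset$, some $a_{ij}>0$ since $D_j\subset f^{-1}L$). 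What your approach buys is that you never need the reduction to $\mathbb{P}^r$ or the double affine cover, and you make ``general $\theta$'' completely explicit as the complement of finitely many hyperplanes; what the paper's approach buys is that in the chart $U_0$ the basis $dx_i/x_i$ is already visible and no separate analysis of the residue map is required. One small remark: your invocation of $T^{r,d}$-invariance to force $\alpha_i$ constant is correct but unnecessary, since $L_i$ is a $\mathbb{P}^{r-1}$-bundle over $A$ and hence $H^0(\mathcal{O}_{L_i})=\mathbb{C}$ already.
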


\begin{proof}
Since we can cover $P^{r,d}$ by finite trivial $\mathbb{P}^r$-bundles, we only need to prove the lemma over each of them. However, to show the trivial bundle case, it is not hard to see that we only need to show the case that $P^{r,d}=\mathbb{P}^r$, and $L=L_0+...+L_{r}$ is $r+1$ hyperplanes of general position.

After changing coordinates, we can assume that $L_i$ is defined by $x_i=0$ and $U_i:=\mathbb{P}^r\setminus L_i$ for $i=0,1,...,r$.

After giving a finite affine covering $\{X_{i,j}\}$ of $X_i= f^{-1}(U_i)$, we only need to show the claim is true for each $f|_{X_{i,j}}: X_{i,j}\to U_i$.

Without loss of generality, we pick $U_0$ which is defined by $x_0\neq 0$, hence 
$$H^0(\mathbb{P}^r, \Omega_{\mathbb{P}^r}^1(log\  L))|_{U_0}$$
is a $\mathbb{C}$-vector space with a base $\{dx_i/x_i\}$, for $1\leq i\leq r$. Pick one of those $X_{0,j}$, denoting it by $V$, and denote $f|_{X_{0,j}}$ by $g:V \to U_0$. Abusing the notation a little bit, we let $L_i$ be the hyperplane defined by $x_i=0$ on $U_0$ and denote $g^*(L_i)=\sum_j m_{i,j}D_{i,j}$ with $m_{i,j}>0$. Set $x_{i,j}$ as the function on $V$ that defines $D_{i,j}$. Then, by a local computation, we have
$$g^*x_i=u_i\prod_{(i,j)}x_{i,j}^{m_{i,j}},$$
where $u_i$ is a nowhere vanishing function on $V$. Hence, $g^*(dx_i/x_i)$ over any $x\in g^{-1}(L_i)$ is of the form 
$$\sum_{(i,j)| x\in  D_{i,j}} m_{i,j}\frac{dx_{i,j}}{x_{i,j}}+\text{holomorphic part}.$$
Hence, it is clear that, for a general linear combination of ${dx_i/x_i}$, its pull back will have a pole on every point of $g^{-1}(L_1+...+L_r)$ and the coefficients of the poles do not have zero locus on that divisor.
\end{proof}

\end{document}